\title{Topological stability of continuous functions with respect to averagings}
\author{Sergiy Maksymenko, Oksana Marunkevych}
\address{Institute of Mathematics of NAS of Ukraine, Str. Tereshchenkivs'ka, 3, 01601, Kyiv, Ukraine}
\email{maks@imath.kiev.ua, oxanamarunkevysh@rambler.ru}
\keywords{averaging, topological equivalence}
\subjclass[2000]{26A99, 60G35}
\newcommand\RRR{\mathbb{R}}
\newcommand\eps{\varepsilon}
\newtheorem{definition}[subsection]{Definition}
\newtheorem{theorem}[subsection]{Theorem}
\newtheorem{lemma}[subsection]{Lemma}
\newtheorem{remark}[subsection]{Remark}
\newtheorem{counterexample}[subsection]{Couonterexample}
\newcommand\fL{f_{L}}
\newcommand\fR{f_{R}}
\begin{document} 

\thispagestyle{plain}
\maketitle

\begin{abstract}
We present sufficient conditions for topological stability of continuous functions $f:\mathbb{R}\to\mathbb{R}$ having finitely many local extrema with respect to averagings by discrete measures with finite supports.
\end{abstract}

\section{Introduction}
In applied problems of signals processing, images restoring and digitizing, noise removing, etc., a crucial role is played by linear filters.
If $x(t)$ is a signal, then the result of the application of a linear filter with impulse response $h(t)$ on time interval $[0,T]$ is a convolution $x*h$ of these functions, i.e. a signal defined by the following formula:
\[
x*h(t) = \int_{0}^{T} x(t-\tau) h(\tau) d\tau.
\]
In the case when the support of $h$ is sufficiently small and $\int_{0}^{T} h(\tau)d\tau = 1$, the function $h$ can be regarded as a density of some measure, while the convolution $x*h$ can be viewed as an \textit{averaging} of $x$ with respect to this measure.
Such averagings are widely used in applications, see e.g.~\cite{AkhmanovDyakovChirkin:1981}, \cite{Kenneth:IEEE:1995}, \cite{Milanfar:IEEE:2013}.

Notice that a priori the ``form'' of a averaged signal $x*h$ can be essentially different of the form of the initial signal $x$.
For instance, if $x$ has a unique maximum point, then $x*h$ may have many maximums.
``Preserving form'' is a principal requirement to filters in the problems of noise removing, computing entropies of time series, and others, see e.g.~\cite{BandtPompe:PRL:2002}, \cite{AntonioukKellerMaksymenko:CDCSA:2014} and references in these papers.

From mathematical point of view <<similarity of forms>> of signals means that they are \textit{topologically equivalent} as functions of time, see Definitions~\ref{def:local_top_equiv} and~\ref{def:top_equiv} below. 

In the present paper we give wide sufficient conditions for topological stability of averagings of piece wise differentiable functions $f:\RRR\to\RRR$ having finitely many local extrema with respect to discrete measures with finite supports, see Theorems~\ref{th:main_theorem_global} and~\ref{th:main_theorem_local}.

Those conditions guarantee that after applying to a signal $x(t)$ a linear filter with impulse response $h(t)$ being a sum of finitely many $\delta$-functions the form of the resulting signal $x*h$ will not change.

\medskip

\section{Averagings of a function}
Let $\mu$ be any probability measure on the closed segment $[-1,1]$.
This means that $\mu$ a non-negative $\sigma$-additive measure defined on the Borel algebra of subsets of $[-1,1]$ and such that $\mu[-1,1]=1$.
Then for each measurable function $f:\RRR\to\RRR$ and a positive number $\alpha>0$ one can define the function $f_{\alpha}:\RRR\to\RRR$ by the following formula:
\begin{equation}\label{equ:averaging}
f_{\alpha}(x) = \int_{-1}^{1} f(x-t\alpha)  d\mu.
\end{equation}
We will call $f_{\alpha}$ an \textit{$\alpha$-averaging} of $f$ with respect to the measure $\mu$.

Notice that if $f$ is defined only on some interval $(a,b)$ and $2\alpha < b-a$, then the formula~\eqref{equ:averaging} determines a function $f_{\alpha}$ on the interval $(a+\alpha, b-\alpha)$.
Moreover,
\begin{equation}\label{equ:estimation_for_falpha}
\inf_{y\in[x-\alpha,x+\alpha]} f(y) \ \ \leq \ \ f_{\alpha}(x) \ \ \leq \ \ \sup_{y\in[x-\alpha,x+\alpha]} f(y).
\end{equation}

Consider few simple cases.

1) Suppose $\mu$ is a discrete measure with finite support.
This means that there exists a finite increasing sequence of points $t_k < t_{k-1} < \ldots < t_2 < t_1 \in [-1,1]$ such that for arbitrary Borel set $A\subset[-1,1]$ its measure is given by
\[
\mu(A) = \sum_{t_i \in A} \mu(t_i).
\]
Then the formula for $\alpha$-averaging of $f:\RRR\to\RRR$ can be represented as follows:
\begin{equation}\label{equ:falpha_discr_mu}
f_{\alpha}(x) = \sum_{i=1}^{k} f(x-t_i\alpha)\,\mu(t_i).
\end{equation}
In particular, if $k=2$, $t_1=-1$, $t_2=+1$ and $\mu(-1)=\mu(1)=\tfrac{1}{2}$, then 
\begin{equation}\label{equ:ariphmetic_average}
f_{\alpha}(x) = \frac{f(x+\alpha)+f(x-\alpha)}{2}.
\end{equation}

2) Suppose that $\mu$ is absolutely continuous, so there exists a measurable function $p:[-1,1]\to[0,\infty)$ such that $\mu(A) = \int_{A} p(t) dt$.
Then
\[
f_{\alpha}(x) = \int_{-1}^{1} f(x-t\alpha) p(t) dt.
\]

\begin{lemma}\label{lm:prop_falpha}
The correspondence $f \mapsto f_{\alpha}$ is a linear operator on the space of all continuous functions $C(\RRR,\RRR)$.
Suppose that $f \in C(\RRR,\RRR)$ has one of the following properties: $f$ is positive, non-negative, negative, non-positive, (strictly) increase, (strictly) decrease, (strictly) convex, (strictly) concave.
Then the same property has the $\alpha$-averaging $f_{\alpha}$ for each $\alpha>0$.
\end{lemma}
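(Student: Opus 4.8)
The plan is to reduce every assertion to three elementary features of the averaging operation: the integral $\int_{-1}^{1}(\,\cdot\,)\,d\mu$ is linear, it is monotone since $\mu$ is non-negative, and $\mu$ has total mass $\mu[-1,1]=1$. I would begin by checking that the operator is well defined, i.e. that $f_\alpha\in C(\RRR,\RRR)$: if $x_n\to x$ then $f(x_n-t\alpha)\to f(x-t\alpha)$ for each $t\in[-1,1]$ and these values stay uniformly bounded once the $x_n$ lie in a compact neighbourhood of $x$, so the dominated convergence theorem gives $f_\alpha(x_n)\to f_\alpha(x)$. Linearity $(af+bg)_\alpha=af_\alpha+bg_\alpha$ is then immediate. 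A consequence used repeatedly below is $(-f)_\alpha=-f_\alpha$, which lets me deduce each of the statements ``negative / non-positive / (strictly) decreasing / (strictly) concave'' from its counterpart ``positive / non-negative / (strictly) increasing / (strictly) convex'' by replacing $f$ with $-f$.

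For the sign and monotonicity properties I would argue pointwise under the integral. If $f\ge 0$ then $f(x-t\alpha)\ge 0$ for every $t$, whence $f_\alpha(x)\ge 0$. If $f$ is increasing and $x_1<x_2$, then $x_1-t\alpha\le x_2-t\alpha$ forces $f(x_1-t\alpha)\le f(x_2-t\alpha)$ for all $t$, and integrating preserves the inequality, so $f_\alpha(x_1)\le f_\alpha(x_2)$.

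Convexity rests on the identity
\[
\lambda x+(1-\lambda)y-t\alpha=\lambda(x-t\alpha)+(1-\lambda)(y-t\alpha),
\]
valid for every $t$ and $\lambda\in[0,1]$. Applying convexity of $f$ to the right-hand side and integrating against $\mu$ yields $f_\alpha(\lambda x+(1-\lambda)y)\le\lambda f_\alpha(x)+(1-\lambda)f_\alpha(y)$, so $f_\alpha$ is convex.

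The only genuine subtlety, and the step I expect to be the main obstacle, is upgrading these to their \emph{strict} versions, since a strict pointwise inequality need not survive integration against an arbitrary measure. Here I would use compactness together with $\mu[-1,1]=1$. For instance, if $f$ is strictly increasing and $x_1<x_2$, the function $g(t)=f(x_2-t\alpha)-f(x_1-t\alpha)$ is continuous and strictly positive on the compact segment $[-1,1]$, hence bounded below by some $m>0$, so $f_\alpha(x_2)-f_\alpha(x_1)=\int_{-1}^{1}g\,d\mu\ge m\,\mu[-1,1]=m>0$. The identical argument gives strict positivity (take $g=f$) and strict convexity (take $g$ to be the pointwise convexity gap for fixed $x\ne y$ and $\lambda\in(0,1)$, which is strictly positive precisely because $x-t\alpha\ne y-t\alpha$). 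The decisive feature throughout is that $\mu$, being a probability measure, cannot annihilate a function bounded below by a positive constant.
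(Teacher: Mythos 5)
Your proposal is correct and follows essentially the same route as the paper's proof: pointwise inequalities under the integral sign, using linearity and monotonicity of integration against the non-negative measure $\mu$, with the decreasing/concave/negative cases reduced to their counterparts (the paper simply leaves these to the reader). The one place you are more careful is the strict inequalities: where the paper just asserts that $\int_{-1}^{1} q\,d\mu > 0$ for strictly positive $q$ ``since the measure $\mu$ is non-negative'' (which tacitly also uses $\mu[-1,1]=1$), you extract a uniform lower bound $m>0$ by compactness and continuity, and you additionally verify that $f_\alpha$ is indeed continuous via dominated convergence --- both worthwhile details the paper omits.
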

\begin{proof}
We consider only the cases of (strictly) increasing and convex functions.
All other statements are either obvious or can be proved in a similar way and we leave them for the reader.

1) Suppose $f$ increases, so for all $x<y\in\RRR$, $t\in[-1,1]$, and $\alpha>0$ we have that $f(x-t\alpha) \leq f(y-t\alpha)$.
Therefore
\[
f_{\alpha}(x) = \int_{-1}^{1} f(x-t\alpha)  d\mu \leq \int_{-1}^{1} f(y-t\alpha)  d\mu = f_{\alpha}(y),
\]
that is $f_{\alpha}$ also increases.

2) If $f$ strictly increases, that is $q(t) = f(y-t\alpha) - f(x-t\alpha) > 0$ for all $t\in[-1,1]$, then
\[
f_{\alpha}(y) - f_{\alpha}(x) = \int_{-1}^{1} q(t) d\mu >0,
\]
since the measure $\mu$ is non-negative.
Hence $f_{\alpha}$ is also strictly increasing.

3) Suppose that $f$ is convex, that is for all $x,y\in\RRR$ and $s\in[0,1]$ we have that
\[
f(sx+(1-s)y) \leq sf(x) +(1-s)f(y).
\]
Then
\begin{align*}
f_{\alpha}(sx+(1-s)y) &=
 \int_{-1}^{1} f(sx+(1-s)y-t\alpha)  d\mu  \\
 & \leq 
 s \int_{-1}^{1} f(x-t\alpha)  d\mu +
(1-s) \int_{-1}^{1} f(y-t\alpha)  d\mu \\ 
&=
sf_{\alpha}(x) +(1-s)f_{\alpha}(y).
\end{align*}
Lemma is completed.
\end{proof}

\begin{lemma}\label{lm:inf_limit_falpha}
Let $f:(a,+\infty)\to\RRR$ be a continuous and strictly monotone function.
Then $\lim\limits_{x\to+\infty}f_{\alpha}(x)= \lim\limits_{x\to+\infty}f(x)$ for all $\alpha>0$. 
\end{lemma}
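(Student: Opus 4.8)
The plan is to reduce the statement to a squeeze argument based on the two-sided estimate \eqref{equ:estimation_for_falpha}. First I would treat the two monotonicity cases separately and assume without loss of generality that $f$ is strictly increasing, the decreasing case being entirely symmetric. Since $f$ is monotone on $(a,+\infty)$, the limit $L:=\lim\limits_{x\to+\infty}f(x)$ exists in the extended real line, and in fact $L\in(-\infty,+\infty]$ because $f$ increases.

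Next I would observe that for an increasing $f$ the infimum and the supremum in \eqref{equ:estimation_for_falpha} are attained at the endpoints of the segment $[x-\alpha,x+\alpha]$, so that estimate specializes to
\[
f(x-\alpha)\ \leq\ f_{\alpha}(x)\ \leq\ f(x+\alpha)
\]
for every $x>a+\alpha$. As $x\to+\infty$ both $x-\alpha$ and $x+\alpha$ tend to $+\infty$, hence both $f(x-\alpha)$ and $f(x+\alpha)$ converge to the common value $L$.

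Finally, applying the squeeze theorem to the displayed double inequality yields $\lim\limits_{x\to+\infty}f_{\alpha}(x)=L=\lim\limits_{x\to+\infty}f(x)$, as required.

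I do not expect a genuine obstacle here; the only point requiring a little care is the possibility that $L=+\infty$, in which case the argument should be read one-sidedly, the lower bound $f(x-\alpha)\leq f_{\alpha}(x)$ alone forcing $f_{\alpha}(x)\to+\infty$. The role of strict monotonicity, as opposed to mere continuity, is precisely to guarantee both the existence of $L$ and the identification of the infimum and supremum in \eqref{equ:estimation_for_falpha} with the endpoint values; this is what lets us avoid any interchange of limit and integration and makes no appeal to a convergence theorem necessary.
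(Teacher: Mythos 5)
Your proposal is correct and follows essentially the same route as the paper: both use the endpoint specialization $f(x-\alpha)\leq f_{\alpha}(x)\leq f(x+\alpha)$ of estimate~\eqref{equ:estimation_for_falpha} for an increasing $f$ and conclude by squeezing. Your explicit treatment of the case $L=+\infty$ is a minor refinement the paper leaves implicit, but it does not change the argument.
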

\begin{proof}
For definiteness assume that $f$ strictly increases.
Then it follows from formula~\eqref{equ:estimation_for_falpha} that for $x>\alpha$ the following inequalities hold:
\[ 
f(x-\alpha) \ = \ \inf_{y\in[x-\alpha,x+\alpha]} f(y) \ \ \leq \ \ f_{\alpha}(x) \ \ \leq \ \ \sup_{y\in[x-\alpha,x+\alpha]} f(y) \ = \ f(x+\alpha),
\]
whence $\lim\limits_{x\to+\infty}f_{\alpha}(x)= \lim\limits_{x\to+\infty}f(x)$.
\end{proof}

\section{Topological equivalence of functions}
At first we will recall the notion of a \textit{germ} of a function.
Let $a\in\RRR$, $U$ be a neighborhood of $a$, and $f,g:U\to\RRR$ be two continuous functions.
Then $f$ and $g$ determine the same \textit{germ} at $a$ whenever $f=g$ on some neighborhood $V\subset U$ of $a$.
The relation ``define the same germ at $a$'' is obviously an equivalence, and the corresponding equivalence classes are called \textit{germs} at $a$.
We will denote the class of $f:U \to\RRR$ at $a$ by $f:(\RRR,a)\to\RRR$ or by $f:(\RRR,a)\to(\RRR, f(a))$ if we wan to specify the value of $f$ at $a$.

Recall also that a homeomorphism $\phi:(a,b) \to (c,d)$ is the same as a continuous surjective strictly monotone function.
Moreover, if $\phi$ increases (decreases) then $\phi$ is said to \textit{preserve (reverse) orientation}.

\begin{definition}\label{def:local_top_equiv}
Let $a,b\in \RRR$ and $f:(\RRR,a)\to\RRR$ and $g:(\RRR,b)\to\RRR$ be two germs of continuous functions at $a$ and $b$ respectively.
Then $f$ and $g$ are called {\bfseries topologically equivalent} if there exist two germs of orientation preserving homeomorphisms $h:(\RRR,a) \to (\RRR,b)$ and $\phi:(\RRR,f(a)) \to (\RRR,g(b))$ such that $\phi \circ f = g \circ h$.
\end{definition}

\begin{remark}\rm
In the definition of topological equivalence it is not necessary to assume that $\phi$ and $h$ preserve orientation.
However in the present paper we will always do this.
\end{remark}

The following simple lemma is left for the reader.
\begin{lemma}\label{lm:local_top_equiv}
Let $f:(\RRR,a)\to\RRR$ and $g:(\RRR,b)\to\RRR$ be two germs of continuous functions.
Suppose also that one of the following conditions holds true:
\begin{enumerate}
\item[\rm(1)]
$f$ and $g$ are strictly monotone on some neighborhoods of $a$ and $b$ respectively;
\item[\rm(2)]
the points $a$ and $b$ are isolated local maximums (resp. local minimums) of $f$ and $g$ respectively.
\end{enumerate}
Then $f$ and $g$ are topologically equivalent.
\end{lemma}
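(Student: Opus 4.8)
The plan is to exhibit explicitly, in each case, a pair of orientation preserving homeomorphism germs $h:(\RRR,a)\to(\RRR,b)$ and $\phi:(\RRR,f(a))\to(\RRR,g(b))$ with $\phi\circ f=g\circ h$; by Definition~\ref{def:local_top_equiv} this is exactly what has to be produced. The one elementary fact I will use throughout is that a continuous strictly monotone germ is invertible, its germ of inverse being again continuous and strictly monotone of the same type.

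Consider case (1). Since $h$ and $\phi$ preserve orientation, the compositions $\phi\circ f$ and $g\circ h$ keep the monotonicity type of $f$ and of $g$; hence $f$ and $g$ are forced to be monotone in the same direction, and I will treat the strictly increasing case, the decreasing one being handled by the very same formula. Choose the translation $h(x)=x-a+b$, which is an orientation preserving homeomorphism germ $(\RRR,a)\to(\RRR,b)$, and set $\phi:=g\circ h\circ f^{-1}$, where $f^{-1}:(\RRR,f(a))\to(\RRR,a)$ is the germ of inverse of $f$. Then $\phi$ is a composition of orientation preserving homeomorphism germs, hence itself an orientation preserving homeomorphism germ $(\RRR,f(a))\to(\RRR,g(b))$, and $\phi\circ f=g\circ h\circ f^{-1}\circ f=g\circ h$, as required.

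Now consider case (2), say that $a$ and $b$ are isolated local maxima (the minimum case is symmetric). In the setting of this paper this means that, after shrinking neighbourhoods, $f$ is strictly increasing on the left of $a$ and strictly decreasing on its right; denote these restrictions by $\fL$ and $\fR$, and let $g_{L},g_{R}$ be the corresponding monotone restrictions of $g$ at $b$. Each is a strictly monotone continuous germ, hence invertible. Fix the orientation preserving translation $\phi(y)=y-f(a)+g(b)$ on the values, and \emph{solve for $h$} on the two sides of $a$ by
\[
h(x)=g_{L}^{-1}\bigl(\phi(\fL(x))\bigr)\ \text{ for } x\le a,
\qquad
h(x)=g_{R}^{-1}\bigl(\phi(\fR(x))\bigr)\ \text{ for } x\ge a.
\]
For $x$ near $a$ the values $\phi(f(x))$ lie just below $g(b)$, so they belong to the ranges of both $g_{L}$ and $g_{R}$ and the inverses are defined; at $x=a$ both formulas give $g_{L}^{-1}(g(b))=g_{R}^{-1}(g(b))=b$.

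It remains to verify that $h$ is an admissible germ. On the left $h=g_{L}^{-1}\circ\phi\circ\fL$ is a composition of three increasing germs, hence strictly increasing with values below $b$; on the right $h=g_{R}^{-1}\circ\phi\circ\fR$ is decreasing $\circ$ increasing $\circ$ decreasing, hence again strictly increasing, with values above $b$. Thus $h$ is strictly increasing across $a$, i.e. an orientation preserving homeomorphism germ $(\RRR,a)\to(\RRR,b)$, and for $x\le a$ (so $h(x)\le b$) we get $g(h(x))=g_{L}(h(x))=\phi(f(x))$, while for $x\ge a$ (so $h(x)\ge b$) we get $g(h(x))=g_{R}(h(x))=\phi(f(x))$; hence $\phi\circ f=g\circ h$ near $a$. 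The one point needing care is exactly this gluing: solving for $h$ rather than for $\phi$ places the two defining formulas on the disjoint sides of $a$, so that they must only match at the single point $a$, which they do — had one instead fixed $h$ and solved for $\phi$, the two expressions for $\phi$ would be forced to agree on a whole interval of values below $f(a)$, a genuine constraint that our construction avoids.
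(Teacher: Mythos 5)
The paper offers no proof of this lemma at all---it is explicitly ``left for the reader''---so there is no argument of the authors to compare yours against; judged on its own, your proof is correct and essentially complete. In case (1), the translation $h(x)=x-a+b$ together with $\phi:=g\circ h\circ f^{-1}$ works exactly as you say, and your preliminary observation is in fact necessary rather than cosmetic: under the paper's convention that both $h$ and $\phi$ preserve orientation, an increasing germ can never be equivalent to a decreasing one, so condition (1) as literally stated must be read as ``strictly monotone of the same type''; you prove the statement precisely in the cases where it can hold. In case (2), the key move---fixing $\phi$ as a translation of values and solving for $h$ separately on the two sides of $a$ via $g_{L}^{-1}$ and $g_{R}^{-1}$---is the right way to handle the non-invertibility of $f$ at an extremum, the verification that the two branches of $h$ glue to a strictly increasing homeomorphism germ is correct, and your closing remark about why one solves for $h$ rather than for $\phi$ identifies the genuine point of the construction.

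One step deserves an explicit line of justification, since it is the only place where the hypothesis ``isolated'' is actually used: your claim that an isolated local maximum forces $f$ to be strictly increasing on one side of $a$ and strictly decreasing on the other. If ``isolated'' were read as ``strict'' (i.e. $f(x)<f(a)$ for all $x\neq a$ near $a$), this claim is false: the function $f(x)=-x^{2}\bigl(2+\sin(1/x)\bigr)$, $f(0)=0$, has a strict maximum at $0$ but is not unimodal on any neighbourhood of $0$, hence not equivalent to $g(x)=-x^{2}$, and the lemma itself would fail. Under the reading consistent with the rest of the paper---$a$ has a punctured neighbourhood containing no local extremum points of $f$---your claim is true, but it rests on the standard fact that a continuous function with no interior local extrema on an interval is injective (two equal values would force an interior maximum or minimum by compactness) and hence strictly monotone; applying this on each side of $a$ and using that $a$ is a maximum yields exactly the unimodal form your construction needs. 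With that sentence added, the proof is complete.
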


\begin{definition}\label{def:top_equiv}
Two continuous functions $f:(a,b) \to \RRR$ and $g:(c,d) \to \RRR$ are called {\bfseries topologically equivalent} if there exist orientation preserving homeomorphisms 
\begin{align*}
h: & \ (a,b)\to(c,d), &
\phi: & \ \RRR\to\RRR,
\end{align*}
such that $\phi\circ f = g \circ h$, that is they made commutative the following diagram:
\[
\begin{CD}
(a,b) @>{f}>{}>\RRR \\
@V{h}VV @VV{\phi}V \\
(c,d) @>{g}>> \RRR
\end{CD}
\]
\end{definition}

We will now recall some results about classification of continuous functions on the real line up to a topological equivalence.
\begin{definition}\label{def:snake}{\rm\cite{Arnold:UMN:1992}}
A {\bfseries generalized snake} of length $k$, or simply a {\bfseries $k$-snake} is an arbitrary sequence of $k$ numbers $\{A_1,\ldots,A_k\}$.
Two $k$-snakes $\{A_1,\ldots,A_k\}$ and $\{B_1,\ldots,B_k\}$ are {\bfseries equivalent} whenever for any $i,j=1,\ldots,k$ the following condition holds true:
\begin{enumerate}
\item[\rm(*)]
$A_i < A_j$ if an only if $B_i < B_j$;
\end{enumerate}
\end{definition}
Evidently, this condition also implies that $A_i = A_j$ if and only if $B_i = B_j$.

Let $f:\RRR\to\RRR$ be a continuous function having only finitely many local extremes $x_1,\ldots,x_n$ and being strictly monotone on the complementary intervals to these points.
In particular, it follows that there exist finite or infinite limits $A_0=\lim\limits_{x\to-\infty} f(x)$ and $A_{n+1}=\lim\limits_{x\to+\infty} f(x)$.
Denote $A_i = f(x_i)$, $i=1,\ldots,n$.
Then the sequence of numbers $\xi(f) = \{A_0,\ldots,A_{n+1}\}$ will be called a \textit{snake associated with $f$}.

The following statement is well-known and can be easily proved.
\begin{lemma}\label{lm:classification_by_snakes}{\rm e.g.~\cite{Thom:Top:1965}, \cite{Arnold:UMN:1992}}
Let $f,g:\RRR\to\RRR$ be continuous functions both having exactly $k$ local extremes for some $k\geq0$ and being strictly monotone on the complementary intervals to these points.
Then $f$ and $g$ are topologically equivalent if and only if the corresponding snakes $\xi(f)$ and $\xi(g)$ are equivalent.
\qed
\end{lemma}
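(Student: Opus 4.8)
The plan is to prove the two implications separately, the common thread being that an orientation-preserving homeomorphism $\phi:\RRR\to\RRR$ carries the snake of $f$ to the snake of $g$ by evaluation, and conversely that any order-preserving correspondence of snakes can be realized by such a $\phi$ together with a suitable reparametrization $h$, in the sense of Definition~\ref{def:top_equiv}. Throughout I read the end values $A_0,A_{k+1}$ as elements of $\overline{\RRR}=[-\infty,+\infty]$ and snake equivalence in the extended order, so that in particular $A_i=\pm\infty$ iff $B_i=\pm\infty$.

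For the necessity, suppose $\phi\circ f=g\circ h$ with $h,\phi$ orientation-preserving homeomorphisms. Since $\phi$ is strictly increasing, postcomposition moves neither the critical points nor their type, so $g\circ h=\phi\circ f$ has local extrema exactly at $x_1<\dots<x_k$ of the same type as $f$; as $h$ is a strictly increasing homeomorphism, the local extrema of $g$ are then precisely $y_i:=h(x_i)$, listed in increasing order, and there are exactly $k$ of them. Evaluating gives $B_i=g(y_i)=g(h(x_i))=\phi(f(x_i))=\phi(A_i)$ for $i=1,\dots,k$. For the end values I would use that an increasing homeomorphism of $\RRR$ extends to a homeomorphism of $\overline{\RRR}$ fixing $\pm\infty$ and that $h(x)\to\pm\infty$ as $x\to\pm\infty$; continuity then yields $B_0=\phi(A_0)$ and $B_{k+1}=\phi(A_{k+1})$ in $\overline{\RRR}$. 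Thus $B_i=\phi(A_i)$ for all $i$, and since $\phi$ is strictly increasing, $A_i<A_j\Leftrightarrow B_i<B_j$, i.e. the snakes are equivalent.

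For the sufficiency, I would first build $\phi$ and then $h$. Because the snakes are equivalent, the assignment $A_i\mapsto B_i$ is well defined (as $A_i=A_j\Rightarrow B_i=B_j$) and is a strictly order-preserving bijection of the finite set $\{A_0,\dots,A_{k+1}\}\subset\overline{\RRR}$ onto $\{B_0,\dots,B_{k+1}\}$ sending finite values to finite values and $\pm\infty$ to $\pm\infty$; extend it by any increasing interpolation to an orientation-preserving homeomorphism $\phi:\RRR\to\RRR$. Next I would use that on each closed interval $J$ bounded by consecutive points of $\{-\infty,x_1,\dots,x_k,+\infty\}$ the function $f$ is a homeomorphism of $J$ onto the interval bounded by the corresponding consecutive snake values, and similarly $g$ on the corresponding interval $J'$; the equivalence of snakes guarantees that $f$ and $g$ have the same direction of monotonicity on $J$ and $J'$, since the sign of the increment of consecutive snake values agrees. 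On each such $J$ define $h:=g^{-1}\circ\phi\circ f$, where $g^{-1}$ is the inverse of $g$ restricted to $J'$; this is a composition of monotone homeomorphisms, hence an orientation-preserving homeomorphism $J\to J'$ carrying the endpoints $x_i\mapsto y_i$, and it satisfies $\phi\circ f=g\circ h$ on $J$ by construction.

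Finally I would glue these pieces. Since adjacent intervals share an endpoint $x_i$, on which all local formulas return the common value $y_i$, the maps agree on overlaps and assemble into a single strictly increasing continuous bijection $h:\RRR\to\RRR$, hence an orientation-preserving homeomorphism, with $\phi\circ f=g\circ h$ holding globally. The main obstacle, and the point deserving the most care, is the bookkeeping at infinity: one must check that the unbounded end intervals are sent onto unbounded end intervals, which is exactly where the convention that infinite limits correspond under the snake equivalence (so that $\phi$ preserves finiteness of the end values) is used. Without matching the infinite limits the statement would fail, as the strictly increasing examples $\arctan$ and the identity already show.
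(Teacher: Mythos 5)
Your proof is correct, but there is nothing in the paper to compare it against: the authors state Lemma~\ref{lm:classification_by_snakes} as well known, cite \cite{Thom:Top:1965} and \cite{Arnold:UMN:1992}, and close the statement with a q.e.d.\ box with no argument at all. What you wrote is the standard argument those references encapsulate: necessity by transporting the extrema and their values through the increasing homeomorphisms $h$ and $\phi$ (so that $y_i=h(x_i)$ and $B_i=\phi(A_i)$, with the end values handled by limits), and sufficiency by interpolating the order-preserving assignment $A_i\mapsto B_i$ to an increasing homeomorphism $\phi$ of $\RRR$, defining $h=g^{-1}\circ\phi\circ f$ on each monotonicity interval, and gluing at the extrema. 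The details check out: corresponding intervals carry the same direction of monotonicity because consecutive snake entries compare the same way, the local formulas agree at the shared endpoints $x_i$, and the glued map is an increasing continuous bijection of $\RRR$.

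The point most worth emphasizing --- and to your credit you caught it --- is the treatment of the end values. The paper's Definition~\ref{def:snake} compares snake entries only through the order relation and says nothing about infinite entries; taken literally, the identity and $\arctan$ have equivalent snakes $\{-\infty,+\infty\}$ and $\{-\pi/2,\pi/2\}$, yet they are not topologically equivalent in the sense of Definition~\ref{def:top_equiv}, since $\phi$ must be a homeomorphism of all of $\RRR$ and so cannot carry an unbounded image onto a bounded one. Your convention that snake equivalence is read in $\overline{\RRR}$, so that infinite entries must correspond to infinite entries, is therefore not a cosmetic choice but a correction needed for the lemma to be true as stated. In the paper's only application of the lemma, the proof of Theorem~\ref{th:main_theorem_global}, the issue is invisible because there $A_0=B_0$ and $A_{n+1}=B_{n+1}$ hold exactly, by Lemma~\ref{lm:inf_limit_falpha}.
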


\begin{definition}\label{def:top_stab_rel_averagings}
Let $f:\RRR\to\RRR$ be a continuous function and $\mu$ be a probability measure on $[-1,1]$.
We will say that $f$ is {\bfseries topologically stable} with respect to the averagings by measure $\mu$ whenever there exists $\varepsilon>0$ such that for all $\alpha\in(0,\varepsilon)$ the functions $f$ and $f_{\alpha}$ are topologically equivalent.
\end{definition}

Similarly, one can give a definition of a local topological stability of averagings by measure $\mu$.
Let $f:(\RRR,a)\to\RRR$ be a germ of a continuous function at a point $a\in\RRR$.
This means that $f$ is a continuous function defined on the interval $(a-\varepsilon, a+\varepsilon)$ for some $\varepsilon>0$.
Then it follows from~\eqref{equ:averaging} that for $\alpha<\varepsilon/2$ the averaging $f_{\alpha}$ is correctly defined on the interval $(a-\varepsilon/2, a+\varepsilon/2)$.
Moreover, the germ of $f_{\alpha}$ at $a$, evidently, depends only on the germ of $f$ at that point.

\begin{remark}\rm
Notice the germs $f$ and $f_{\alpha}$ at $a$ are in general not topologically equivalent.
For example, if $a$ is an isolated local minimum of $f$, then $f_{\alpha}$ may also have an isolated local minimum $b$ very closed to $a$ but distinct from $a$.
Then the germs of $f$ and $f_{\alpha}$ at $a$ are not topological equivalent, though by Lemma~\ref{lm:local_top_equiv} the restriction of $f$  on some neighborhood $(c_1,c_2)$ of $a$ will be topologically equivalent to the restriction of $f_{\alpha}$ to some neighborhood $(d_1,d_2)$ of $b$.
This observation leads to the following definition.
\end{remark}

\begin{definition}\label{def:loc_top_stab}
A germ $f:(\RRR,a)\to\RRR$ is said to be {\bfseries topologically stable} with respect to averagings by measure $\mu$ if there exists $\varepsilon>0$ such that for each $\alpha\in(0,\varepsilon)$ the following condition holds true:
\begin{itemize}
\item[]
there exist $c_1,c_2, d_1,d_2\in(a-\eps, a+\eps)$ depending on $\alpha$ and such that $c_1 < a < c_2$, $d_1 < d_2$, and the restrictions 
\begin{align*}
f|_{(c_1,c_2)}: & \ (c_1,c_2) \ \to \ \RRR, &
f_{\alpha}|_{(d_1,d_2)}: & \ (d_1,d_2) \ \to \ \RRR
\end{align*}
are topologically equivalent.
\end{itemize}
\end{definition}

In the present paper we give sufficient conditions for topological stability of averagings of piece-wise differentiable functions with respect to averaging by discrete probability measures with finite supports.

The following theorem shows that for functions of ``general position'' with finitely many local extremes a local stability with respect to averagings by measure $\mu$ implies global stability.

\begin{theorem}\label{th:main_theorem_global}
Let $\mu$ be a probability measure on $[-1,1]$ and $f:\RRR \to \RRR$ be a continuous function having only finitely many local extremes $x_1,\ldots,x_n$ and being strictly monotone on the complement to these points.
As above denote 
\begin{align*}
A_{0}&=\lim\limits_{x\to-\infty} f(x), &
A_i&=f(x_i), \ i=1,\ldots,n, &
A_{n+1}&=\lim\limits_{x\to+\infty} f(x).
\end{align*}
Suppose that the following conditions hold:
\begin{enumerate}
\item[\rm(1)]
the numbers $A_1,\ldots,A_n$ are mutually distinct and differ from $A_0$ and $A_{n+1}$ as well;
\item[\rm(2)]
for each $i=1,\ldots,n$ the germ $f:(\RRR,x_i) \to \RRR$ of $f$ at $x_i$ is topologically stable with respect to averagings by $\mu$.
\end{enumerate}
Then $f$ is topologically stable with respect to averagings by $\mu$.
\end {theorem}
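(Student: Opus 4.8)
The plan is to reduce everything to the snake classification of Lemma~\ref{lm:classification_by_snakes}. Concretely, I would fix $\eps>0$ smaller than $\min_i(x_{i+1}-x_i)$ and than the constants provided by condition~(2), and then show that for all sufficiently small $\alpha>0$ the averaging $f_\alpha$ again has exactly $n$ local extremes $y_1<\cdots<y_n$, with $y_i$ close to $x_i$ and of the same type (maximum/minimum) as $x_i$, is strictly monotone on the complementary intervals, and satisfies $\lim_{x\to\pm\infty}f_\alpha=\lim_{x\to\pm\infty}f$. Granting this, the snake $\xi(f_\alpha)=\{A_0,A_1',\dots,A_n',A_{n+1}\}$ has the same length as $\xi(f)$, and only the comparison of the two snakes remains. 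Two preliminary facts I would record first: (a) $f_\alpha\to f$ uniformly on every compact set as $\alpha\to0$, since $f$ is uniformly continuous on compacts and $\mu$ is a probability measure; and (b) by Lemma~\ref{lm:inf_limit_falpha} the outer terms satisfy $A_0'=A_0$ and $A_{n+1}'=A_{n+1}$ exactly.

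Next I would localize the extremes. On each open arc $(x_i,x_{i+1})$ the function $f$ is strictly monotone, so for $x<x'$ with $[x-\alpha,x'+\alpha]\subset(x_i,x_{i+1})$ one has $f(x-t\alpha)<f(x'-t\alpha)$ for every $t\in[-1,1]$, and integrating against $\mu$ gives $f_\alpha(x)<f_\alpha(x')$, exactly as in the proof of Lemma~\ref{lm:prop_falpha}. Hence $f_\alpha$ is strictly monotone, in the same direction as $f$, on $(x_i+\alpha,\,x_{i+1}-\alpha)$, and likewise on the two unbounded arcs. Consequently every local extreme of $f_\alpha$ must lie in one of the shrinking transition windows $[x_i-\alpha,x_i+\alpha]$, $i=1,\dots,n$. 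Since $f_\alpha$ is increasing just to the left and decreasing just to the right of such a window around a maximum $x_i$ (and symmetrically for a minimum), each window contains at least one extreme of the correct type; in particular $y_i\in[x_i-\alpha,x_i+\alpha]$, so $y_i\to x_i$ as $\alpha\to0$.

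The heart of the argument, and the step I expect to be the main obstacle, is to upgrade ``at least one'' to ``exactly one'' in each window, i.e. to rule out the extra extrema warned about in the Remark preceding Definition~\ref{def:loc_top_stab}; this is exactly where hypothesis~(2) enters. Local stability at $x_i$ provides, for each small $\alpha$, an interval $(d_1,d_2)$ on which $f_\alpha$ is topologically equivalent to the single-bump germ $f|_{(c_1,c_2)}$. Writing $f_\alpha=\phi\circ f\circ h^{-1}$ and using that orientation-preserving homeomorphisms preserve strict monotonicity, this forces $f_\alpha$ to be strictly increasing and then strictly decreasing across its unique extreme $y_i$ on $(d_1,d_2)$. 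I would then glue this local picture to the monotonicity already obtained on the adjacent arcs: for $\alpha$ small the increasing arc $(x_{i-1}+\alpha,x_i-\alpha)$ (resp. the decreasing arc $(x_i+\alpha,x_{i+1}-\alpha)$) overlaps the increasing (resp. decreasing) half of $(d_1,d_2)$, and two monotone pieces of the same direction with overlapping domains assemble into a single monotone piece. The delicate point, which must be handled with care, is that $(d_1,d_2)$ is only guaranteed to exist inside the fixed window $(x_i-\eps,x_i+\eps)$ and may shrink with $\alpha$, so one must argue that no wiggle of $f_\alpha$ can survive in the gap between $(d_1,d_2)$ and the two monotone arcs; here the finiteness of the extreme count forced by the equivalence on $(d_1,d_2)$, together with the fixed signs of the one-sided monotonicity, is what closes the gap. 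The upshot is that $f_\alpha$ has precisely one extreme $y_i$, of the same type as $x_i$, in each window and none elsewhere, so $f_\alpha$ has exactly $n$ local extremes and is strictly monotone in between.

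Finally I would compare snakes. Writing $A_i'=f_\alpha(y_i)$, fact~(a) together with $y_i\to x_i$ gives $A_i'\to f(x_i)=A_i$ as $\alpha\to0$, while $A_0'=A_0$ and $A_{n+1}'=A_{n+1}$ by~(b). By hypothesis~(1) the values $A_0,\dots,A_{n+1}$ are mutually distinct, so the finite ones among them are separated by some $\delta>0$, while any infinite endpoints are matched exactly by~(b). Choosing $\alpha$ small enough that each $|A_i'-A_i|<\delta/2$, the total order of $\{A_0',\dots,A_{n+1}'\}$ coincides with that of $\{A_0,\dots,A_{n+1}\}$, i.e. the snakes $\xi(f_\alpha)$ and $\xi(f)$ are equivalent in the sense of Definition~\ref{def:snake}. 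Lemma~\ref{lm:classification_by_snakes} then yields that $f$ and $f_\alpha$ are topologically equivalent for all small $\alpha>0$, which is precisely the asserted topological stability. Note that it is condition~(1) that makes this last step robust: without distinct critical values, the arbitrarily small changes $A_i\mapsto A_i'$ could flip a tie and alter the equivalence class of the snake.
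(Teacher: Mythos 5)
Your route is the same as the paper's: strict monotonicity of $f_\alpha$ on the arcs between the windows (via the argument of Lemma~\ref{lm:prop_falpha}), equality of the limits at $\pm\infty$ (Lemma~\ref{lm:inf_limit_falpha}), closeness of the critical values, and finally the snake classification Lemma~\ref{lm:classification_by_snakes} combined with hypothesis~(1). Those parts of your argument are correct and are exactly what the paper does in compressed form.

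However, the step you yourself single out as the heart of the argument is a genuine gap, and the sentence you offer to close it does not close it. Definition~\ref{def:loc_top_stab} produces only \emph{some} interval $(d_1,d_2)\subset(x_i-\eps,x_i+\eps)$, which may be a tiny subinterval of the window $[x_i-\alpha,x_i+\alpha]$; the equivalence of $f_\alpha|_{(d_1,d_2)}$ with $f|_{(c_1,c_2)}$ says nothing about $f_\alpha$ on $[x_i-\alpha,x_i+\alpha]\setminus(d_1,d_2)$, and the monotonicity of $f_\alpha$ on the adjacent arcs does not reach into the window, so ``finiteness of the extreme count forced by the equivalence on $(d_1,d_2)$'' rules out nothing there. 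In fact no argument can close this gap from the literal definition. Take $\mu$ with $\mu(-1)=\mu(1)=\tfrac12$ and $f(x)=|x|+h(x)$, where $h(x)=0$ for $x\le 0$ and $h'(x)=\tfrac12\,x\sin(1/x)$ for small $x>0$ (cut off smoothly, so $|h'|\le\tfrac12$ and $f$ is strictly decreasing on $(-\infty,0)$, strictly increasing on $(0,\infty)$). Then for every small $\alpha$ one computes
\[
f'_\alpha(x)=\tfrac14\,(x+\alpha)\sin\bigl(1/(x+\alpha)\bigr),\qquad x\in(-\alpha,\alpha),
\]
so $f_\alpha$ has infinitely many strict local extrema accumulating at $-\alpha$, and hence is never topologically equivalent to $f$: global stability fails. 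Yet the germ of $f$ at $0$ \emph{does} satisfy Definition~\ref{def:loc_top_stab}: between two consecutive maxima of $f_\alpha$ the function is strictly decreasing then strictly increasing, so that restriction is topologically equivalent to $f|_{(c_1,c_2)}$ for a suitable choice of $c_1<0<c_2$ (allowed, since $c_1,c_2$ may depend on $\alpha$). So hypotheses (1)--(2) hold while the conclusion fails; what your argument needs is a stronger reading of local stability, namely that $(d_1,d_2)$ can be chosen to contain the whole window $[x_i-\alpha,x_i+\alpha]$, equivalently that $f_\alpha$ has a unique extremum near $x_i$ --- and that stronger property is what Lemma~\ref{lm:convex_functions_loc_stab} and Theorem~\ref{th:main_theorem_local} actually establish. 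To be fair, the paper's own proof has exactly the same hole: it asserts in one sentence that (2) and Lemma~\ref{lm:prop_falpha} give exactly $n$ extremes for $f_\alpha$. You located the weak point correctly, but your patch is not a proof.
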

\begin{proof}
It suffices to find $\varepsilon>0$ such that the snakes $\xi(f)$ and $\xi(f_{\alpha})$ are equivalent for all $\alpha\in(0,\varepsilon)$.
Then it will follows from Lemma~\ref{lm:classification_by_snakes} that $f$ and $f_{\alpha}$ are topologically equivalent, whence $f$ will be topologically stable with respect to averagings by $\mu$.

Since $f$ has only finitely many local extremes, it follows from (2) and Lemma~\ref{lm:prop_falpha} that there exists $\varepsilon>0$ such that for all $\alpha\in(0,\varepsilon)$ the averaging $f_{\alpha}$ has also exactly $n$ local extremes.
Let $\xi(f_{\alpha}) = \{B_0,B_1,\ldots,B_{n+1}\}$ be the corresponding snake for $f_{\alpha}$.

Then by Lemma~\ref{lm:inf_limit_falpha} $A_0 = B_0$ and $A_{n+1} = B_{n+1}$.
Moreover, it follows from inequalities~\eqref{equ:estimation_for_falpha} that one can reduce $\varepsilon$ so that for each pair $i\not=j$ the condition $A_i<A_j$ will imply that $B_i < B_j$ as well and that $B_i$ also differs from $B_0$ and $B_{n+1}$.
This implies that the snakes $\xi(f)$ and $\xi(f_{\alpha})$ are equivalent.
\end{proof}

\section{Topological stability of germs with respect to averagings}
Let $\eps>0$ and $f:(-\eps, \eps)\to\RRR$ be a continuous function such that $0$ is an isolated local minimum for $f$ and that $f$ monotone decreases on $(-\eps,0]$ and monotone increases on $[0,\eps)$.
It will be convenient to denote
\begin{align*}
\fL &= f|_{(-\eps,0]}: (-\eps,0] \longrightarrow \RRR, &
\fR &= f|_{[0, \eps)}: [0, \eps) \longrightarrow \RRR.
\end{align*}

\begin{lemma}\label{lm:convex_functions_loc_stab}
Let $\mu$ be a probability measure on $[-1,1]$. 
Then each of the following conditions implies that the germ of $f$ at $0$ is locally stable with respect to averagings by $\mu$:
\begin{enumerate}
\item[\rm(1)]
$f$ is strictly convex;
\item[\rm(2)]
$f$ is $C^{1}$-differentiable $(-\eps,0) \cup (0,+\eps)$ and $f'$ is strictly increasing on $(-\eps,0)\cup(0,+\eps)$;
\item[\rm(3)]
$f$ is $C^{2}$-differentiable on some neighborhood of $0$ and $f''(0)>0$.
\end{enumerate}
\end{lemma}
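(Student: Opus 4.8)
The plan is to reduce conditions (2) and (3) to condition (1) and then prove the statement for a strictly convex $f$, where Lemma~\ref{lm:prop_falpha} does most of the work. First I would note that, together with the standing assumption that $\fL$ decreases and $\fR$ increases, condition (2) already forces $f$ to be strictly convex on $(-\eps,\eps)$: on each of the intervals $(-\eps,0)$ and $(0,\eps)$ a strictly increasing derivative gives strict convexity, while the monotonicity of $\fL$ and $\fR$ gives $f'\le 0$ on the left and $f'\ge 0$ on the right, so the one-sided derivatives at $0$ satisfy $f'(0^-)\le 0\le f'(0^+)$; hence the only possible corner at $0$ is convex, and no affine segment can cross $0$ (it would contradict strict convexity on one side). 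For condition (3), after shrinking $\eps$ we have $f''>0$ on $(-\eps,\eps)$ by continuity, so $f'$ is strictly increasing there and (3) implies (2). Thus it suffices to treat (1).

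So assume $f$ is strictly convex. By Lemma~\ref{lm:prop_falpha} the averaging $f_\alpha$ is then strictly convex on its domain for every $\alpha>0$. The key step is to locate an \emph{interior} minimum of $f_\alpha$ on a fixed small interval. Fix $\delta\in(0,\eps/2)$; then for $0<\alpha<\delta$ the estimation~\eqref{equ:estimation_for_falpha} gives
\[
f_\alpha(0)\ \le\ \max\bigl(f(-\alpha),f(\alpha)\bigr),\qquad f_\alpha(\delta)\ \ge\ f(\delta-\alpha),\qquad f_\alpha(-\delta)\ \ge\ f(\alpha-\delta).
\]
As $\alpha\to 0^{+}$ the right-hand sides tend to $f(0)$, $f(\delta)$ and $f(-\delta)$, and $f(\pm\delta)>f(0)$ since $0$ is a strict minimum. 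Hence for all sufficiently small $\alpha$ one has $f_\alpha(0)<\min\bigl(f_\alpha(-\delta),f_\alpha(\delta)\bigr)$, so $f_\alpha$ attains its minimum over $[-\delta,\delta]$ at some interior point $m$. Strict convexity then makes $m$ the unique local extreme of $f_\alpha$ on $(-\delta,\delta)$, with $f_\alpha$ strictly decreasing on $(-\delta,m]$ and strictly increasing on $[m,\delta)$.

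Finally I would match the two V-shaped restrictions. The restriction $f|_{(-\delta,\delta)}$ has the single local minimum $0$ and is strictly monotone on either side of it. Choosing a value $v$ slightly above $f(0)$ and letting $c_1\in(-\delta,0)$ and $c_2\in(0,\delta)$ be the unique points with $f(c_1)=f(c_2)=v$, the restriction $f|_{(c_1,c_2)}$ has equal one-sided limits $v$ at both ends. Doing the same for $f_\alpha$ around $m$ produces $d_1<m<d_2$ with $f_\alpha(d_1)=f_\alpha(d_2)=w$ for some $w>f_\alpha(m)$. Both restrictions then have a single minimum with equal boundary limits, so their associated snakes are $\{v,f(0),v\}$ and $\{w,f_\alpha(m),w\}$, which are equivalent; hence by Lemma~\ref{lm:classification_by_snakes} (in its evident version for functions on an interval, or by a direct piecewise construction of $h$ and $\phi$) the two restrictions are topologically equivalent, which is precisely the requirement of Definition~\ref{def:loc_top_stab}.

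I expect the main obstacle to lie in this last step rather than in the convexity bookkeeping: the notion in Definition~\ref{def:loc_top_stab} is equivalence of the \emph{functions} on intervals, not of germs, and for orientation-preserving $h$ and $\phi$ the relative order of the two boundary limit values is an invariant. Forcing the endpoint values to coincide on each restriction is exactly what removes this obstruction; without such a choice the left and right limits of $f$ and of $f_\alpha$ need not be ordered the same way, and a naive attempt at the equivalence would fail.
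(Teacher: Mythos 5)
Your proposal is correct and follows essentially the same route as the paper: the paper likewise reduces (3)~$\Rightarrow$~(2)~$\Rightarrow$~(1), invokes Lemma~\ref{lm:prop_falpha} to get strict convexity of $f_{\alpha}$, concludes that $f_{\alpha}$ has a unique minimum, and then cites Lemma~\ref{lm:local_top_equiv} for the equivalence. Your extra steps --- localizing an interior minimum of $f_{\alpha}$ via~\eqref{equ:estimation_for_falpha} and trimming the intervals so the boundary values match --- are just careful fillings-in of details the paper dispatches with the phrase ``whence it has a unique minimum point'' and with Lemma~\ref{lm:local_top_equiv}, whose proof is left to the reader.
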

\begin{proof}
Evidently, (3)~$\Rightarrow$~(2)~$\Rightarrow$~(1), so it suffices to prove (1).

(1) Suppose $f$ is strictly convex and let $\alpha<2\eps$.
Then by Lemma~\ref{lm:prop_falpha} the averaging $f_{\alpha}$ is also strictly convex, whence it has a unique minimum point as well as $f$.
Then by Lemma~\ref{lm:local_top_equiv} $f$ and $f_{\alpha}$ are topological equivalent and so the germ of $f$ at $0$ is topological stable with respect to averagings by measure $\mu$.
\end{proof}

\begin{remark}\rm
One can assume that in (3) of Lemma~\ref{lm:convex_functions_loc_stab} the homeomorphisms $h$ and $\phi$ satisfying $\phi \circ f = f_{\alpha} \circ h$ are \textit{diffeomorphisms}.
Indeed, the assumption that $f$ belongs to class $C^2$ near $0$ and $f''(0)>0$ means $0$ is a non-degenerate critical point.
Moreover, for all small $\alpha>0$ the function $f_{\alpha}$ will also belong to class $C^2$ and also will have a unique minimum point, say  $x_{\alpha}$, with $f''_{\alpha}(x_{\alpha})>0$.
Therefore $x_{\alpha}$ is a non-degenerate critical point for $f_{\alpha}$ as well.
Then by Morse Lemma the germs $f:(\RRR,0)\to\RRR$ and $f_{\alpha}:(\RRR,x_{\alpha})\to\RRR$ are smoothly equivalent, that is $h$ and $\phi$ can be chosen to be diffeomorphisms, see~\cite[Theorem~II.6.9, Proposition~III.2.2]{GolubitskiGuillemin:StableMats:ENG:1973}.
\end{remark}

\section{Main result}
Let $\mu$ be a probability measure on $[-1,1]$ with finite support $t_k<t_{k-1}<\cdots<t_1$.
Put $p_i = \mu(t_i)$, $i=1,\ldots,k$.
Then $p_i>0$ and $p_k+\cdots+p_1 = 1$.

In what follows we will assume that the function $f:(-\eps, \eps)\to\RRR$ belongs to the class $C^1$ on $(-\eps,0) \cup (0,+\eps)$ and there exist finite or infinite limits
\begin{align}\label{equ:limits_LR}
L &= \lim_{x\to0-0} \fL'(x), & R &= \lim_{x\to0+0} \fR'(x).
\end{align}
Evidently, $L\leq0$ and $R\geq0$.

If $L$ and $R$ are finite, then for each $j=1,\ldots,k-1$ we introduce the following numbers:
\begin{align}\label{equ:numbers_Xj}
X_j =& L (p_1+\cdots+ p_{j}) + R (p_{j+1} + \cdots + p_k).
\end{align}
It is easy to see that they satisfy the following inequality:
\[
L \ \leq \ X_{k-1} \ < \ X_{k-2} \ < \ \cdots \ < \ X_{1} \ \leq  \ R.
\]

\begin{theorem}\label{th:main_theorem_local}
Suppose that one of the following conditions holds true:
\begin{itemize}
\item[\rm(a)] 
both limits $L$ and $R$ are finite and $X_j \not=0$ for $j=1,\ldots,k-1$.
\item[\rm(b)] 
one of the limits either $L$ or $R$ is infinite and the other one is finite.
\end{itemize}
Then the germ of $f$ at $0$ is topologically stable with respect to averagings by measure $\mu$.
\end{theorem}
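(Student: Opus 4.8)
The plan is to show that for all sufficiently small $\alpha>0$ the averaging $f_{\alpha}$ has, on some neighborhood of a suitable point $x_{\alpha}$, a single local extremum which is an isolated minimum, with $f_{\alpha}$ strictly decreasing on its left and strictly increasing on its right. Once this is done, the restriction of $f_{\alpha}$ to that neighborhood and the restriction of $f$ to $(-\eps,\eps)$ are both ``$V$-shaped'' (a single isolated minimum flanked by strictly monotone branches), hence topologically equivalent by Lemma~\ref{lm:local_top_equiv}(2), which is exactly the conclusion required by Definition~\ref{def:loc_top_stab}. Since $\mu$ is discrete, formula~\eqref{equ:falpha_discr_mu} gives $f_{\alpha}(x)=\sum_{i=1}^{k} f(x-t_{i}\alpha)\,p_{i}$, and wherever it is defined (i.e.\ for $x\notin\{t_{1}\alpha,\ldots,t_{k}\alpha\}$) its derivative is $f_{\alpha}'(x)=\sum_{i=1}^{k} f'(x-t_{i}\alpha)\,p_{i}$. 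The problem thus reduces to locating the sign changes of this sum on the ``mixing zone'' $(t_{k}\alpha,t_{1}\alpha)$, where the summands coming from the left branch $\fL$ and from the right branch $\fR$ are superimposed.

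The crucial device is the rescaling $x=\alpha u$, turning every argument into $x-t_{i}\alpha=\alpha(u-t_{i})$. As $\alpha\to0^{+}$ each such argument tends to $0$, from the left if $u<t_{i}$ and from the right if $u>t_{i}$; by the hypothesis that the one-sided limits~\eqref{equ:limits_LR} exist, $f'(\alpha(u-t_{i}))\to L$ when $u<t_{i}$ and $\to R$ when $u>t_{i}$. Moreover, on a fixed compact $u$-interval all these arguments lie in an interval $(-C\alpha,C\alpha)$ shrinking to $0$, so the convergence is uniform in $u$ away from the points $u=t_{i}$. Consequently $\tfrac1{\alpha}f_{\alpha}'(\alpha u)$ converges uniformly (off the kinks) to the non-decreasing step function $g(u)=L\sum_{t_{i}>u}p_{i}+R\sum_{t_{i}<u}p_{i}$, whose value on $(t_{j+1},t_{j})$ equals $X_{j}$, equals $L$ for $u<t_{k}$, and equals $R$ for $u>t_{1}$. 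In this way the whole question is reduced to the sign pattern of the explicit ladder $L\le X_{k-1}<\cdots<X_{1}\le R$, whose ends satisfy $L\le0\le R$.

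Under hypothesis~(a) every $X_{j}\neq0$, so along this monotone ladder the sign changes across a single kink $t_{i_{0}}$, the limiting derivative $g$ being negative to its left and positive to its right; the only exception is that the extreme value $L$ (if $i_{0}=k$) or $R$ (if $i_{0}=1$) might equal $0$. At $x_{\alpha}:=t_{i_{0}}\alpha$ I would take a neighborhood lying in the two plateau intervals adjacent to $t_{i_{0}}$; fixing $\eta$ below half the least, in absolute value, of the finitely many nonzero plateau values $|L|,|X_{k-1}|,\ldots,|X_{1}|,|R|$, uniform convergence makes $f_{\alpha}'<0$ on the left part and $f_{\alpha}'>0$ on the right part for small $\alpha$, so $f_{\alpha}$ is strictly decreasing then strictly increasing, with a unique minimum at $x_{\alpha}$. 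When the flanking value is $0$ (i.e.\ $L=0$ or $R=0$) I would instead argue directly: on the region where all arguments $x-t_{i}\alpha$ share a common sign, $f_{\alpha}$ is a sum of strictly monotone compositions $\fL(x-t_{i}\alpha)$ or $\fR(x-t_{i}\alpha)$ (as in Lemma~\ref{lm:prop_falpha}), hence strictly monotone there. The case $L=R=0$ cannot occur under~(a), since it forces every $X_{j}=0$.

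Under hypothesis~(b), say $R=+\infty$ with $L$ finite (the case $L=-\infty$ follows by the reflection $x\mapsto-x$). Here the ladder $X_{j}$ is undefined, but the blow-up of $\fR'$ makes matters simpler: for any fixed $u>t_{i}$ the argument $\alpha(u-t_{i})\to0^{+}$, so $f'(\alpha(u-t_{i}))\to+\infty$, dominating the finitely many bounded contributions from the left-branch terms. Thus on every interval of the mixing zone at least one summand tends to $+\infty$ while the rest stay bounded, forcing $f_{\alpha}'>0$ there for small $\alpha$; together with the strict monotonicity of $f_{\alpha}$ on $(t_{1}\alpha,+\infty)$ (all arguments positive), this gives $f_{\alpha}'>0$ on all of $(t_{k}\alpha,+\infty)$, while on $(-\infty,t_{k}\alpha)$ either $g=L<0$ or, if $L=0$, the composition argument again yields strict decrease. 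Hence $f_{\alpha}$ has its unique minimum at $x_{\alpha}=t_{k}\alpha$, and in both cases the desired equivalence follows from Lemma~\ref{lm:local_top_equiv}(2). I expect the main obstacle to be precisely the control of $f_{\alpha}'$ throughout the mixing zone: since $f'$ need not be monotone, the summands could oscillate, and it is only the rescaling $x=\alpha u$—which pushes every argument to $0$ and thereby invokes the one-sided limits $L,R$—that tames this and reduces everything to the elementary ladder of the $X_{j}$.
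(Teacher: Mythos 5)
Your proposal is correct and takes essentially the same approach as the paper's own proof. Your rescaled step-function limit with plateau values $X_j$ is a repackaging of the paper's functions $g_j(x_1,\ldots,x_k)=\sum_{i\leq j}\fL'(-x_i)\,p_i+\sum_{i>j}\fR'(x_i)\,p_i$ (which satisfy $g_j(0,\ldots,0)=X_j$): both arguments read off the sign of $f_{\alpha}'$ on each interval $(t_{j+1}\alpha,t_j\alpha)$ of the mixing zone from the ladder $L\leq X_{k-1}<\cdots<X_1\leq R$ (using one-sided continuity of $\fL'$, $\fR'$ in case (a) and domination by the infinite limit in case (b)), handle the outer regions by monotonicity of averaging, and conclude via Lemma~\ref{lm:local_top_equiv} that $f_{\alpha}$ has a unique isolated minimum for all small $\alpha$.
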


In order to make the proof of Theorem~\ref{th:main_theorem_local} more clear we will first formulate and prove a special case.
\begin{lemma}\label{lm:main_result_ariphmetic_average}
Let $\mu$ be a discrete measure on $[-1,1]$ such that $\mu(-1) = \mu(1) = \tfrac{1}{2}$ and so
\[ f_{\alpha}(x) = \frac{f(x+\alpha)+f(x-\alpha)}{2} = 
\frac{1}{2} \cdot 
\begin{cases}
\fL(x+\alpha)+\fL(x-\alpha), &  x \in (-\eps+\alpha, -\alpha), \\
\fL(x+\alpha)+\fR(x-\alpha), & x \in [-\alpha, \alpha], \\
\fR(x+\alpha)+\fR(x-\alpha), & x \in (\alpha, \eps-\alpha).
\end{cases}
\]
see Eq.~\eqref{equ:ariphmetic_average}.
Suppose that both limits $L$ and $R$ are finite and $L+R \not=0$.
Then germ of $f$ at $0$ is topologically stable with respect to the averagings by $\mu$.
\end{lemma}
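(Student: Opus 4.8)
The plan is to reduce everything to the qualitative shape of $f_\alpha$ near its minimum. Since $0$ is an isolated local minimum of $f$ with $f$ strictly decreasing on $(-\eps,0]$ and strictly increasing on $[0,\eps)$, Lemma~\ref{lm:local_top_equiv}(2) tells us that it suffices to produce, for every sufficiently small $\alpha>0$, a subinterval on which $f_\alpha$ has a single isolated local minimum and is strictly monotone on each of its sides. I would establish this by analysing $f_\alpha$ separately on the three pieces appearing in the displayed formula: the two ``pure'' pieces $(-\eps+\alpha,-\alpha)$ and $(\alpha,\eps-\alpha)$, and the ``mixed'' middle piece $[-\alpha,\alpha]$.

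On the pure pieces there is nothing to control. On $(-\eps+\alpha,-\alpha)$ the value $f_\alpha(x)=\tfrac12\bigl(\fL(x+\alpha)+\fL(x-\alpha)\bigr)$ is the average of two strictly decreasing functions, hence strictly decreasing (this is exactly the monotonicity statement of Lemma~\ref{lm:prop_falpha} applied to $\fL$); symmetrically, on $(\alpha,\eps-\alpha)$ the function $f_\alpha$ is an average of strictly increasing copies of $\fR$ and is therefore strictly increasing. Thus all the behaviour that could create or destroy extrema is concentrated on the middle piece, where $f_\alpha(x)=\tfrac12\bigl(\fR(x+\alpha)+\fL(x-\alpha)\bigr)$.

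The heart of the argument is the middle piece, and this is where I expect the main difficulty. Differentiating on $(-\alpha,\alpha)$ gives $f_\alpha'(x)=\tfrac12\bigl(\fR'(x+\alpha)+\fL'(x-\alpha)\bigr)$, which is legitimate because for small $\alpha$ one has $x+\alpha\in(0,2\alpha)$ and $x-\alpha\in(-2\alpha,0)$, so both arguments stay in the regions where $f$ is $C^1$. As $\alpha\to0$ these arguments collapse to $0$, so by the existence of the one-sided limits in~\eqref{equ:limits_LR} we get $\fR'(x+\alpha)\to R$ and $\fL'(x-\alpha)\to L$ uniformly in $x$. Hence $f_\alpha'(x)$ is uniformly close to $\tfrac{L+R}{2}$ on the whole middle piece. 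The hypothesis $L+R\neq0$ now does the decisive work: I would fix $\eta=\tfrac14|L+R|$, choose $\alpha$ so small that $|f_\alpha'(x)-\tfrac{L+R}{2}|<\eta$ throughout $(-\alpha,\alpha)$, and conclude that $f_\alpha'$ keeps the constant sign of $L+R$ there, so that $f_\alpha$ is strictly monotone on $[-\alpha,\alpha]$ (increasing if $L+R>0$, decreasing if $L+R<0$).

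Assembling the three pieces finishes the proof. If $L+R>0$ then $f_\alpha$ strictly decreases on $(-\eps+\alpha,-\alpha]$ and strictly increases on $[-\alpha,\eps-\alpha)$, so the junction point $-\alpha$ is the unique, isolated local minimum of the required kind; if $L+R<0$ the same assembly puts the unique minimum at $+\alpha$. In either case the germ of $f_\alpha$ at its minimum has exactly the same qualitative shape as the germ of $f$ at $0$, and Lemma~\ref{lm:local_top_equiv}(2) yields the desired topological equivalence of the corresponding restrictions, proving local topological stability. It is worth stressing why $L+R\neq0$ cannot be dropped: it is precisely the condition that prevents the averaged derivative on the mixed interval from degenerating to values near $0$, which is exactly the situation in which $f_\alpha$ could develop spurious oscillations and additional extrema on $[-\alpha,\alpha]$.
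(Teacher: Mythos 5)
Your proof is correct and takes essentially the same route as the paper's: the same three-piece decomposition, Lemma~\ref{lm:prop_falpha} on the two pure pieces, the sign of $f'_{\alpha}$ on the mixed middle piece controlled by the continuity of the extended one-sided derivatives (your uniform-closeness estimate with $\eta=\tfrac14|L+R|$ is just a quantitative rephrasing of the paper's choice of $\delta$ with $\fL'(-x)+\fR'(y)\neq0$ for $x,y\in(0,\delta)$), and the conclusion via Lemma~\ref{lm:local_top_equiv}. Incidentally, you silently correct a typo in the paper, which writes $\fL(x+\alpha)+\fR(x-\alpha)$ on $[-\alpha,\alpha]$ where it should be $\fR(x+\alpha)+\fL(x-\alpha)$, as in your version.
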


Notice that under the assumptions on $\mu$ we have in Lemma~\ref{lm:main_result_ariphmetic_average} that $k=2$, $p_2 = p_1 = \tfrac{1}{2}$.
Hence 
\begin{align}\label{equ:flplusfr_non_zero}
X_1 &=\tfrac{1}{2}(L + R) \not=0,
\end{align}
Thus Lemma~\ref{lm:main_result_ariphmetic_average} is a particular case of Theorem~\ref{th:main_theorem_local}.

\begin{proof}[Proof of Lemma~\ref{lm:main_result_ariphmetic_average}]
It suffices to find $\delta>0$ such that for all $\alpha\in(0,\delta/2)$ the function $f_{\alpha}$ will have a unique minimum as well as $f$.
Then by Lemma~\ref{lm:local_top_equiv} $f$ and $f_{\alpha}$ will be topologically equivalent.

Since the limits~\eqref{equ:limits_LR} are finite, the derivatives $\fL'$ and $\fR'$ continuously extend to the closed segments $(-\eps,0]$ and $[0,\eps)$ respectively, so that $\fL'(0)=L$ and $\fR'(0)=R$.
Then we get from~\eqref{equ:flplusfr_non_zero} and continuity of $\fL'$ and $\fR'$ it follows that there exists $\delta\in(0,\eps)$ such that for any $x,y\in(0,\delta)$ the following inequality holds:
\[\fL'(-x) + \fR'(y) \not=0.\]
By assumption $\fL$ is monotone decreasing on $(-\eps, 0)$ while $\fR$ is monotone increasing on $(0, \eps)$.
Hence we get from Lemma~\ref{lm:prop_falpha} that $f_{\alpha}$ is monotone increasing on $(-\eps+\alpha, -\alpha)$ and monotone decreasing on $(\alpha, \eps-\alpha)$.
We will show that $f_{\alpha}$ is strictly monotone on $(-\alpha, \alpha)$.
Therefore $f_{\alpha}$ will have a minimum point at one of the ends of the segment $[-\alpha, \alpha]$ in accordance with the sign of $L+R$.

For definiteness assume that $L+R>0$, whence $\fL'(-x) + \fR'(y) > 0$ for all $x,y\in(0,\delta)$.
Therefore for $\alpha\in(0, \delta/2)$ and $x \in (-\alpha, \alpha) \subset (-\delta/2, \delta/2)$ we have that
\[
-\delta \ < \ x-\alpha \ < \ 0 \  < \ x+\alpha \ < \ \delta,
\]
whence
\[ f'_{\alpha}(x)  = \fL'(x+\alpha)+\fR'(x-\alpha) > 0. \]
Thus $f_{\alpha}$ is strictly monotone on $[-\alpha, \alpha]$.
On the other hand, $f_{\alpha}$ is strictly increasing on $(-\eps+\alpha,-\alpha]$ and strictly increasing on $[\alpha,\eps-\alpha]$.
Therefore $f_{\alpha}$ has a unique minimum point $x=-\alpha$.
\end{proof}

Let us also show that the assumption~\eqref{equ:flplusfr_non_zero} is essential. 

\begin{counterexample}\rm
Let $f(x) = |x|$.
Then $\fL(x) =-x$, $\fR(x) =x$, $L = \fL'(0) = -1$ and $R = \fR'(0) = +1$, whence
\[
X_1 = L + R  = -1+1 = 0,
\]
so the condition~\eqref{equ:flplusfr_non_zero} fails. 
In this case for every $\alpha>0$
\[ f_{\alpha}(x) = \frac{1}{2}\bigl(|x+\alpha|+|x-\alpha|\bigr) = 
\begin{cases}
-x, &  x \in (-\infty, -\alpha), \\
\alpha, & x \in [-\alpha, \alpha], \\
x, & x \in (\alpha, +\infty).
\end{cases}
\]
Thus $f_{\alpha}$ is constant on the segment $[-\alpha, \alpha]$, whence it is not topologically equivalent to $f$.
\end{counterexample}

\section{Proof of Theorem~\ref{th:main_theorem_local}}
If $k=1$, then $f_{\alpha}(x) = f(x-t_1\alpha)$, whence $f$ and $f_{\alpha}$ are topologically equivalent.

So assume that $k\geq2$.
It suffices to show that there exists $\delta>0$, such that for $\alpha\in(0,\delta/2)$ the function $f_{\alpha}$ has a unique minimum point as well as $f$.
Then by Lemma~\ref{lm:local_top_equiv} $f$ and $f_{\alpha}$ will be topologically equivalent.

Notice that $f_{\alpha}$ is given by the following formulas:
\begin{equation}\label{equ:falpha_general_case}
f_{\alpha}(x) = 
\left\{\begin{array}{ll}
\sum\limits_{i=1}^{k} \fL(x-t_i\alpha) p_i, & \ x \in (-\eps+\alpha, t_k\alpha), \\ [4mm]
\sum\limits_{i=1}^{j} \fL(x-t_i\alpha) p_i + \sum\limits_{i=j+1}^{k} \fR(x-t_i\alpha) p_i, & \ x \in [t_{j+1}\alpha, t_{j}\alpha), \\
 & \ k-1 \geq j \geq 1,\\ [4mm]
\sum\limits_{i=1}^{k} \fR(x-t_i\alpha) p_i, & \ x \in [t_1\alpha, \eps-\alpha).
\end{array}\right.
\end{equation}
Indeed, it follows from the condition $x \in (-\eps+\alpha, t_k\alpha)$ that $x-t_k\alpha<0$, and therefore $x-t_i\alpha<0$ for all $i=1,\ldots,k$.
Hence the value $f_{\alpha}(x)$ is given by the first line of formula~\eqref{equ:falpha_general_case}.

Further, the assumption $x \in [t_{j+1}\alpha, t_{j}\alpha)$ is equivalent to $x-t_{j}\alpha < 0 \leq x-t_{j+1}\alpha$, whence
\[
x-t_1\alpha \ < \ \cdots \ < \ x-t_{j}\alpha \ < \  0 \ \leq \ x-t_{j+1}\alpha \ < \ \cdots \ < \ x-t_k\alpha.
\]
Therefore $f_{\alpha}(x)$ is given by the second line of formula~\eqref{equ:falpha_general_case}.

Similarly, it follows from the assumption $x \in [t_1\alpha, \eps-\alpha)$ that $x-t_i\alpha\geq0$ for all $i=1,\ldots,k$, and therefore the value $f_{\alpha}(x)$ is given by the third line of~\eqref{equ:falpha_general_case}.

\medskip
By assumption $\fL$ is monotone decreasing on $(-\eps, 0)$, while $\fR$ is monotone decreasing on $(0, \eps)$.
Then by Lemma~\ref{lm:prop_falpha} $f_{\alpha}$ is monotone decreasing on $(-\eps+\alpha, t_k\alpha)$ and monotone increasing on $(t_1\alpha, \eps-\alpha)$.
We will show that for some $m\in\{1,\ldots,k\}$ the function $f_{\alpha}$ is strictly decreasing on $(t_k\alpha, t_m\alpha)$ and strictly increasing on $(t_m\alpha, t_1\alpha)$.
This will imply that $f_{\alpha}$ has a unique minimum point $t_m\alpha$.

For each $j=k-1,\ldots,2,1$ define a function $g_j:(0,\eps)^k \to \RRR$ by the following formula:
\begin{align*}
g_j(x_1,\ldots,x_k) &= \sum_{i=1}^{j} \fL'(-x_i)\, p_i \ + \ \sum_{i=j+1}^{k} \fR'(x_i)\, p_i.
\end{align*}

\begin{lemma}\label{lm:suff_cond_for_stab}
Suppose that there exist $\delta\in(0,\eps)$ and $m\in\{1,\ldots,k\}$ such that for all $(x_1,\ldots,x_{k})\in(0,\delta)^k$ the following inequalities hold:
\begin{equation}\label{equ:signs_of_derivative}
\begin{aligned}
g_j(x_1,\ldots,x_k)&<0, \qquad j \geq m, \\
g_j(x_1,\ldots,x_k)&>0, \qquad j < m.
\end{aligned}
\end{equation}
Then for each $\alpha\in(0,\delta/2)$ the function $f_{\alpha}$ has a unique minimum point $x = t_{m}\alpha$.
\end{lemma}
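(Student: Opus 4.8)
The plan is to show that, for a suitable $\alpha$, the averaging $f_{\alpha}$ strictly decreases to the left of $t_m\alpha$ and strictly increases to its right, which immediately yields that $t_m\alpha$ is the unique minimum point. The two ``outer'' regions $(-\eps+\alpha,\,t_k\alpha)$ and $(t_1\alpha,\,\eps-\alpha)$ are already under control: there $f_{\alpha}$ is a positive combination of shifted copies of the strictly monotone functions $\fL$ and $\fR$ respectively, so by Lemma~\ref{lm:prop_falpha} it is strictly decreasing on the first and strictly increasing on the second. Thus the whole problem reduces to determining the sign of $f_{\alpha}'$ on each of the ``middle'' open intervals $(t_{j+1}\alpha,\,t_j\alpha)$, $j=1,\ldots,k-1$.

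On such an interval I would differentiate the second line of~\eqref{equ:falpha_general_case}, obtaining
\[
f_{\alpha}'(x) = \sum_{i=1}^{j}\fL'(x-t_i\alpha)\,p_i + \sum_{i=j+1}^{k}\fR'(x-t_i\alpha)\,p_i,
\]
and then recognize the right-hand side as $g_j$ evaluated at the point whose coordinates are $x_i = t_i\alpha - x$ for $i\le j$ and $x_i = x - t_i\alpha$ for $i\ge j+1$. The key verification is that, for $\alpha\in(0,\delta/2)$ and $x\in(t_{j+1}\alpha,\,t_j\alpha)$, this point lies in $(0,\delta)^k$. Positivity of every coordinate follows from the ordering $t_k<\cdots<t_1$: for $i\le j$ one has $t_i\alpha-x\ge t_j\alpha-x>0$, and for $i\ge j+1$ one has $x-t_i\alpha\ge x-t_{j+1}\alpha>0$. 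The upper bound is where the constant $\delta/2$ is used: since $t_i,t_{i'}\in[-1,1]$ the spread of any two support points is at most $2$, so each coordinate is bounded by $(t_i-t_{j+1})\alpha$ or $(t_j-t_i)\alpha$, hence by $2\alpha<\delta$. Once the point is known to lie in $(0,\delta)^k$, the hypothesis~\eqref{equ:signs_of_derivative} applies directly: $f_{\alpha}'<0$ on $(t_{j+1}\alpha,\,t_j\alpha)$ when $j\ge m$, and $f_{\alpha}'>0$ when $j<m$.

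It then remains to assemble these local statements. The intervals with $j\ge m$ together with the outer decreasing region tile $(-\eps+\alpha,\,t_m\alpha)$, while those with $j<m$ together with the outer increasing region tile $(t_m\alpha,\,\eps-\alpha)$. Since $f_{\alpha}$ is continuous (being a positive combination of continuous functions) and strictly decreasing on each of the consecutive open pieces composing the first region, strict monotonicity propagates across the junction points $t_j\alpha$, so $f_{\alpha}$ is strictly decreasing on all of $(-\eps+\alpha,\,t_m\alpha)$; symmetrically it is strictly increasing on $(t_m\alpha,\,\eps-\alpha)$. Consequently $t_m\alpha$ is the unique minimum point of $f_{\alpha}$, as claimed; the extreme cases $m=1$ and $m=k$ fit the same scheme with one of the two families of middle intervals empty.

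The main obstacle I anticipate is the careful bookkeeping at the boundaries between pieces. The averaged function $f_{\alpha}$ generically has corners at the points $t_j\alpha$ (where a term switches from $\fR$ to $\fL$ and the one-sided slopes $R$ and $L$ disagree), so $f_{\alpha}'$ need not exist there, and at a left endpoint one of the coordinates $x_i$ degenerates to $0$, falling outside the open box $(0,\delta)^k$. This is precisely why I would work only on the \emph{open} middle intervals, where the box condition holds strictly, and recover monotonicity across the corner points from continuity alone rather than from the derivative. Verifying the correct index correspondence between the arguments of $g_j$ and the shifts $x-t_i\alpha$ --- in particular that $i\le j$ produces negative shifts (hence $\fL'$) while $i\ge j+1$ produces non-negative ones (hence $\fR'$) --- is the other point requiring care.
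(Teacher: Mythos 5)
Your proposal is correct and follows essentially the same route as the paper's proof: express $f_{\alpha}'$ on each open interval $(t_{j+1}\alpha, t_j\alpha)$ as $g_j$ evaluated at a point of $(0,\delta)^k$ (using $2\alpha<\delta$), apply the sign hypothesis~\eqref{equ:signs_of_derivative}, and combine with the outer monotonicity from Lemma~\ref{lm:prop_falpha}. Your bookkeeping is in fact slightly more careful than the paper's --- you state the coordinates $t_i\alpha-x$ and $x-t_i\alpha$ with the correct signs (the paper's displayed formula has a sign misprint, writing $x+t_i\alpha$ where formula~\eqref{equ:falpha_general_case} gives $x-t_i\alpha$) and you explicitly justify strict monotonicity across the corner points $t_j\alpha$ by continuity, which the paper leaves implicit.
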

\begin{proof}
Let $\alpha\in(0,\delta/2)$, $j\in\{k-1,\ldots,2,1\}$ and $x \in (t_{j+1}\alpha, t_{j}\alpha)$.
Then by formula~\eqref{equ:falpha_general_case}, 
\begin{align*}
f'_{\alpha}(x) &= \sum\limits_{i=1}^{j} \fL'(x+t_i\alpha) p_i + \sum\limits_{i=j+1}^{k} \fR'(x+t_i\alpha) p_i \\
& = g_j\bigl(\,  -x-t_1\alpha, \, \ldots, \, -x-t_j\alpha, \ x+t_{j+1}\alpha, \, \ldots, \, x+t_{k}\alpha\, \bigr).
\end{align*}
Also notice that $|x| < \alpha$, whence
\[
  |x-t_i\alpha| \leq |x| + \alpha < 2\alpha < \delta, \qquad i=1,\ldots,k.
\]
Then it follows from~\eqref{equ:signs_of_derivative} that $f'_{\alpha}(x)<0$ for $x< t_m\alpha$ and $f'_{\alpha}(x)>0$ for $x> t_m\alpha$.

Thus the derivative $f'_{\alpha}$ is defined on $(t_k\alpha, t_1\alpha)$ except possibly finitely many points of the form $t_i\alpha$, $i=1,\ldots,k$, takes negative values on $(t_k\alpha, t_m\alpha)$ and positive values of $(t_m\alpha, t_1\alpha)$.
Hence $f_{\alpha}$ has a unique minimum point $x=t_m\alpha$.
\end{proof}

It remains to check that each of the conditions (a) and (b) of Theorem~\ref{th:main_theorem_local} implies~\eqref{equ:signs_of_derivative}.

\medskip

(a) Suppose that the limits $L$ and $R$ are finite and $X_j \not=0$ for all $j=1,\ldots,k-1$.
It follows from finiteness of $L$ and $R$ that $\fL'$ and $\fR'$ extend to $[0,\eps)$ and $(-\eps,0]$ respectively by $\fL'(0)=L$ and $\fR'(0)=R$.
Therefore, see~\eqref{equ:numbers_Xj},
\begin{equation}\label{equ:numbers_Xj_func_gj}
\begin{aligned}
X_j  &= L (p_1+\cdots+ p_{j}) + R (p_{j+1} + \cdots + p_k) \\
     &= \sum_{i=1}^{j} \fL'(0)\, p_i \ + \ \sum_{i=j+1}^{k} \fR'(0)\, p_i = g_j(0,\ldots,0) \not=0.
\end{aligned}
\end{equation}

Hence by continuity of $\fL'$ and $\fR'$ there exists $\delta>0$ such that for all $(x_1,\ldots,x_{k})\in(0,\delta)^k$ and $j=1,\ldots,k-1$ the value $g_j(x_1,\ldots,x_k)$ has the same sign as $X_j$.
Due to Lemma~\ref{lm:suff_cond_for_stab} this sign also coincides with the sign of the derivative $f'_{\alpha}$ on the interval $(t_{j+1}\alpha, t_{j}\alpha)$.

Recall that $L < X_{k-1} < \cdots < X_{1} < R$ and $L \leq 0 \leq R$.
Hence there exists $m\in\{k,k-1,\ldots,1\}$ such $X_j<0$ for $k-1\geq j\geq m$ and $X_j>0$ for $1\leq j < m$.
Therefore the assumptions~\eqref{equ:signs_of_derivative} of Lemma~\ref{lm:suff_cond_for_stab} hold, whence $f$ has a unique minimum at $t_m\alpha$.
Let us explain this in more details:
\begin{enumerate}
\item[(i)]
if $0 < X_{k-1} < \cdots < X_{1}$, then $m=k$ and $f$ decreases on $(-\eps,t_k\alpha]$ and increases on $[t_k\alpha, \eps)$, and so $f$ has a minimum point $t_k\alpha$;
\item[(ii)]
if $X_{k-1} < \cdots < X_{m} < 0 < X_{m+1} < \cdots < X_{1}$, then $f$ decreases on $(-\eps,t_m\alpha]$ and increases on $[t_m\alpha, \eps)$, and so $f$ has a minimum point $t_m\alpha$;
\item[(iii)]
if $X_{k-1} < \cdots < X_{1} < 0$, then $m=1$, $f$ decreases on $(-\eps,t_1\alpha]$ and increases on $[t_1\alpha, \eps)$, whence $f$ has a minimum point $t_1\alpha$.
\end{enumerate}

\medskip

(b) Suppose that $|L|<\infty$ and $R = +\infty$.
Then one can find $\delta>0$ such that $g_j(x_1,\ldots,x_k)>0$ for all $(x_1,\ldots,x_{k})\in(0,\delta)^k$ and $j=1,\ldots,k-1$.
This means that the assumptions of Lemma~\ref{lm:suff_cond_for_stab} hold for $m=k$, whence $f_{\alpha}$ will have a unique minimum point $x=t_k\alpha$.

Similarly if $L=-\infty$ and $|R| < \infty$, then the function $f_{\alpha}$ will have a unique minimum point $x=t_1\alpha$.
Theorem~\ref{th:main_theorem_local} is completed.
\qed


\def\cprime{$'$}
\providecommand{\bysame}{\leavevmode\hbox to3em{\hrulefill}\thinspace}
\providecommand{\MR}{\relax\ifhmode\unskip\space\fi MR }
\providecommand{\MRhref}[2]{%
  \href{http://www.ams.org/mathscinet-getitem?mr=#1}{#2}
}
\providecommand{\href}[2]{#2}

\end{document}